\begin{document}
\title[\hfilneg \hfil Existence of solutions]
{Existence and Multiplicity of solutions for Kirchhoff type
equations in Physical Education}

\author[Amirreza Kiaroosta, Seyyed Sadegh Kazemipoor \hfil \hfilneg]
{Amirreza Kiaroosta, Seyyed Sadegh Kazemipoor}

\address{Amirreza Kiaroosta \newline
} \email{amirkiaroosta306@gmail.com}

\address{Seyyed Sadegh Kazemipoor (corresponding author) \newline
Mathematics Department, Sharif University of Technology \\
Public University in Tehran, Iran} \email{s.kazemipoor@umz.ac.ir}

\thanks{}
\subjclass[2000]{35J60, 47J30, 35J20} \keywords{ Kirchhoff type
equation; Dirichlet problem; Physical Education; critical point;
\hfill\break\indent Mountain Pass Theorem}

\begin{abstract}
The fact that potentially skilled, but biologically later-maturing
athletes are less likely to be selected into talent development
programmes can represent a failure of Talent Identification in
sports. In this article, we prove the existence of solutions for
Kirchhoff type equations with Dirichlet boundary-value condition. We
use the Mountain Pass Theorem in critical point theory, without the
(PS) condition.
\end{abstract}

\maketitle \numberwithin{equation}{section}
\newtheorem{theorem}{Theorem}[section]
\newtheorem{lemma}[theorem]{Lemma}
\newtheorem{remark}[theorem]{Remark}
\newtheorem{definition}[theorem]{Definition}
\allowdisplaybreaks

\section{Introduction and statement of main results}

Consider the Kirchhoff type problem
\begin{equation}\label{Eq.(5.1)} 
\begin{gathered}
-(a+b\int_\Omega |\nabla u|^2dx)\Delta u=f(x,u), \quad\text{in } \Omega, \\
u=0,  \quad\text{on } \partial \Omega,
\end{gathered}
\end{equation}
where $\Omega $ is a smooth bounded domain in $ {\mathbb {R}}^N$,
$a,b>0$, and $f(x,t): \overline{\Omega}\times {\mathbb R}$ is a
continuous real function and satisfies the subcritical condition
\begin{equation}\label{8.1}
 |f(x,t)|\leq C(|t|^{p-1}+1)\quad \text{for some }
 2<p<2^*=\begin{cases}\frac{2N}{N-2},& N\geq 3,\\
\infty, & N=1,2,
\end{cases}
 \end{equation}
where $C$ denotes some positive constant.

It is pointed out in \cite{MB} that the similar nonlocal problems
model several physical and biological systems where $u$ describes a
process which depends on the average of itself, for example that of
the population density.

Problem \eqref{Eq.(5.1)} is related to the stationary analogue of
the
 Kirchhoff equation
$$
u_{tt}-\Big(a+b\int_\Omega|\nabla u|^2dx\Big)\Delta u=g(x,u),
$$
proposed by Kirchhoff in \cite{GK} as an extension of the classical
d'Alembert wave equation for free vibrations of elastic strings.
Kirchhoff's model takes into account the changes in length of the
string produced by transverse vibrations. It received great
attention only after Lions \cite{JL} proposed an abstract framework
for the problem.

Positive solutions are considered by authors, such as Alves et al
\cite{CO}, Ma and Rivera \cite{TF}, Cheng and Wu \cite{CX}, Yang and
Zhang \cite{YJ}. Problems on the unbounded domain $ {\mathbb {R}}^3$
have also been considered by authors, such as Jin and Wu \cite{JXW},
Nie and Wu \cite{NX}, Wu \cite{XW}, Liu and He \cite{LH}, Li et al
\cite{LL}, Li and Ye \cite{GL}. And more recently the concentration
behavior of positive solutions has
 been studied by He and Zou \cite{HZ}, Wang et al \cite{WT}, He et al \cite{hl}.
A result with Hartree-type nonlinearities can be found in L\"{u}
\cite{df}. Ground state nonlinear with critical growth is considered
in He and Zou \cite{HZ2}. The readers may consult Bernstein
\cite{SB} and Poho\v{z}aev \cite{SI}, Sun and Tang \cite{JJS}, Chen
et al \cite{CY}, Cheng \cite{BC}, Perera and Zhang \cite{KP, ZT},
Mao and Zhang \cite{AM}, Sun and Liu \cite{SJ} and the references
therein, for more information on this problem.

There are many solvability conditions for problem \eqref{Eq.(5.1)}
with $f$, like the asymptotical linear case (at infinity)  in
\cite{YJ} and people are more interested in the superlinear case (at
infinity):
\begin{itemize}
\item[(S1)] there exists $\theta \geq 1$ such that $\theta G(t)\geq G(st)$
for all $t\in \mathbb {R}$ and $s \in [0, 1]$, where
$G(t)=f(t)t-4F(t)$ (see \cite{JJS});

\item[(S2)] $\lim_{|t|\to\infty} G(t)=\infty$ and there exist
$ \sigma> \max \{1, N/2\}$ and $C>0$ such that $|f (t)|^\sigma\leq
CG(t)|t|^\sigma$ for $|t|$ large (see \cite{AM}); or some limitation
forms,

\item[(S3)] $\lim_{|t|\to \infty} [f(t)t-4F(t)]=\infty$ (see \cite{YYJ});

\item[(S4)] $ \liminf_{|t|\to\infty}\frac{f(x,t)t-4F(x,t)}{|t|^\tau}
>-\alpha$ uniformly in $x\in \Omega$, where $\tau \in [0, 2]$ and
$0< \alpha<a\lambda_1$, $\lambda_1$ is the first eigenvalue of
$\big(-\Delta, H_0^1(\Omega)\big)$ (see \cite{BC}).

\end{itemize}
All kinds of conditions are mainly making sure the boundness of the
Cerami or Palais-Smale sequences. The following condition on $f$
which is called Ambrosetti-Rabinowitz condition  is often used:
\begin{itemize}
\item[(S5)] there exists $\theta>4$ such that
$f(x, t)t\geq \theta F(x,t)$ for $|t|$ large, where $F (x,
t)=\int_0^tf(x, s)ds$.
\end{itemize}

We consider the nonlinear eigenvalue problem
\begin{equation}
\begin{gathered}
-\Big(\int_{\Omega} |\nabla u|^2 dx\Big)\Delta u=\mu u^3,
\quad\text{in }
  \Omega, \\
u=0,  \quad\text{on }  \partial\Omega,
\end{gathered}
\end{equation}
whose the eigenvalues are the critical values of the functional
\begin{equation}
J(u)=\|u\|^4 ,\quad u\in S:=\big\{u \in H_0^1(\Omega): \int_\Omega
|u|^4dx=1\big\},
\end{equation}
where $\|u\|=\big(\int_\Omega |\nabla u|^2dx\big)^{1/2}$. We already
know the first eigenvalue $\mu_1 > 0$ and the first eigenfunction
$\psi_1 > 0$ (see \cite{ZT}).

Now, we can state our main results.

\begin{theorem} \label{thm1.1}
Assume that $f\in C(\Omega\times{\mathbb {R}}, {\mathbb {R}})$
satisfies \eqref{8.1} and
\begin{itemize}
\item[(F1)] ${\lim_{|t|\to\infty}\big(\frac{a\lambda_1}{2}t^2
+\frac{b\mu_1}{4}t^4-F(x,t)\big)=+\infty}$ uniformly in $x\in
\Omega$;

\item[(F2)] there exists $\lambda>\lambda_1$ such that
$F(x,t)\geq\frac{a\lambda}{2}t^2$ for $|t|$ small.
\end{itemize}
Then  \eqref{Eq.(5.1)} has at least one nontrivial solution.
\end{theorem}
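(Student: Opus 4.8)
The plan is to obtain the solution as a global minimizer of the energy functional
\[
 I(u)=\frac{a}{2}\int_\Omega|\nabla u|^2\,dx+\frac{b}{4}\Big(\int_\Omega|\nabla u|^2\,dx\Big)^{2}-\int_\Omega F(x,u)\,dx,\qquad u\in H_0^1(\Omega),
\]
whose critical points are exactly the weak solutions of \eqref{Eq.(5.1)}. The subcritical bound \eqref{8.1} makes $I$ of class $C^1$ on $H_0^1(\Omega)$ and makes $u\mapsto\int_\Omega F(x,u)\,dx$ sequentially weakly continuous, via the compact embedding $H_0^1(\Omega)\hookrightarrow L^p(\Omega)$. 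Two elementary inequalities will be used throughout: the Poincar\'e inequality $\lambda_1\int_\Omega u^2\le\|u\|^2$, and the variational characterization of $\mu_1$, which gives $\mu_1\int_\Omega|u|^4\le\|u\|^4$ for every $u\in H_0^1(\Omega)$.

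First I would check that $I$ is bounded below and that $\inf I<0$. By (F1) the continuous function $(x,t)\mapsto\frac{a\lambda_1}{2}t^2+\frac{b\mu_1}{4}t^4-F(x,t)$ is bounded below on $\overline\Omega\times\mathbb R$, say by $-M$; combining this with the two inequalities above yields $I(u)\ge\int_\Omega\big(\frac{a\lambda_1}{2}u^2+\frac{b\mu_1}{4}u^4-F(x,u)\big)dx\ge-M|\Omega|$, so $m:=\inf_{H_0^1(\Omega)}I$ is finite. For the negativity, test with $t\varphi_1$, where $\varphi_1>0$ is the first Dirichlet eigenfunction (normalized so that $\int_\Omega\varphi_1^2=1$): for $|t|$ small (F2) applies and gives $I(t\varphi_1)\le\frac{a}{2}(\lambda_1-\lambda)t^2+\frac{b\lambda_1^2}{4}t^4$, which is negative for small $t>0$ since $\lambda>\lambda_1$. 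Hence $m<0=I(0)$, so any minimizer is automatically nontrivial.

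The crux is to show that minimizing sequences are bounded. Let $I(u_n)\to m$ and suppose, for contradiction, that $\|u_n\|\to\infty$ along a subsequence. Put $v_n=u_n/\|u_n\|$; then $v_n\rightharpoonup v$ in $H_0^1(\Omega)$ and $v_n\to v$ in $L^2(\Omega)\cap L^4(\Omega)$ and a.e.\ (after passing to a further subsequence). Dividing $I(u_n)$ by $\|u_n\|^4$ shows $\int_\Omega F(x,u_n)/\|u_n\|^4\to b/4$, while the upper bound $F(x,t)\le\frac{a\lambda_1}{2}t^2+\frac{b\mu_1}{4}t^4+M$ coming from (F1) gives, in the limit, $1\le\mu_1\int_\Omega v^4$. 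Together with $\mu_1\int_\Omega v^4\le\|v\|^4\le\liminf\|v_n\|^4=1$ this forces $\|v\|=1$ (so in fact $v_n\to v$ strongly) and $\int_\Omega v^4=1/\mu_1>0$; in particular $v\ne0$ on a set of positive measure, on which $|u_n(x)|=\|u_n\|\,|v_n(x)|\to\infty$. Then (F1) and Fatou's lemma (the integrand being bounded below by $-M$) give $\int_\Omega\big(\frac{a\lambda_1}{2}u_n^2+\frac{b\mu_1}{4}u_n^4-F(x,u_n)\big)dx\to+\infty$, hence $I(u_n)\to+\infty$ --- contradicting $I(u_n)\to m$. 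Therefore $(u_n)$ is bounded.

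To conclude, extract $u_n\rightharpoonup u$ in $H_0^1(\Omega)$ with $u_n\to u$ in $L^p(\Omega)$. Since $u\mapsto\|u\|^2$ and $u\mapsto\|u\|^4$ are weakly lower semicontinuous and $u\mapsto\int_\Omega F(x,u)$ is weakly continuous, $I$ is weakly lower semicontinuous, so $I(u)\le\liminf I(u_n)=m$ and therefore $I(u)=m$. Then $u$ is a global minimizer, hence $I'(u)=0$, i.e.\ $u$ is a weak solution of \eqref{Eq.(5.1)}, and $I(u)=m<0=I(0)$ gives $u\ne0$. (Equivalently, one may run Ekeland's variational principle to produce a minimizing Palais--Smale sequence for $I$, check its boundedness exactly as above, and upgrade weak to strong convergence using \eqref{8.1} and $a+b\|u_n\|^2\ge a>0$; this is the route compatible with the Mountain Pass Theorem without assuming the (PS) condition.) I expect the boundedness of minimizing/Palais--Smale sequences to be the only genuine difficulty: (F1) is calibrated exactly to the thresholds $a\lambda_1$ and $b\mu_1$, so any blow-up is pushed onto the first eigenfunction $\psi_1$ of the quartic eigenvalue problem, where Fatou's lemma then closes the argument; everything else is routine once \eqref{8.1} is in hand.
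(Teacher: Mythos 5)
Your argument is correct and follows the same overall strategy as the paper --- minimize $I$ globally, use (F1) for the lower bound and (F2) to make the infimum negative so the minimizer is nontrivial --- but you handle the key compactness step by a genuinely different device. The paper proves the stronger statement that $I$ is \emph{coercive}: it quotes a lemma of Liu and Tang which upgrades (F1) to a bound $F(x,t)-\frac{a\lambda_1}{2}t^2-\frac{b\mu_1}{4}t^4\le -G(t)+g(x)$ with $G$ subadditive, coercive and $G(t)\le |t|+4$, then splits $u=\overline u+\widetilde u$ along the first linear eigenfunction, controlling $\|\widetilde u\|$ through the spectral gap $\lambda_1<\lambda_2$ and $\overline u$ through the coercivity of $\int_\Omega G(\overline u)\,dx$ on $E_1$. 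You instead show only that minimizing sequences are bounded, by normalizing $v_n=u_n/\|u_n\|$ and observing that the quartic terms force any blow-up profile $v$ to saturate the variational characterization of $\mu_1$ (so $v\ne 0$ on a set of positive measure), after which (F1) and Fatou's lemma contradict $I(u_n)\to m$; this is in the same spirit as the paper's Lemma \ref{lem2.2}, but with the blow-up concentrating on a $\mu_1$-extremal rather than on $E_1$. What each buys: your route is self-contained (no external decomposition lemma, no eigenspace splitting) and boundedness of minimizing sequences is all the direct method needs, while the paper's route yields genuine coercivity of $I$ and avoids any weak-continuity discussion. Two small caveats, both essentially shared with the paper: the $L^4$ steps (compactness of $H_0^1(\Omega)\hookrightarrow L^4(\Omega)$, finiteness of $\mu_1$) implicitly require $N\le 3$, and testing (F2) along $t\varphi_1$ requires $\varphi_1\in L^\infty(\Omega)$, which holds by elliptic regularity (the paper sidesteps this by combining (F2) with \eqref{8.1} to get $F(x,t)\ge\frac{a\lambda}{2}t^2-C|t|^p$ for all $t$).
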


\begin{remark} \label{rmk1}  \rm
Theorem \ref{thm1.1} is a new for the case
 $\liminf_{|t|\to\infty}\frac{F(x, t)}{t^4}\leq\frac{b\mu_1}{4}$.
The condition (F1) is  weaker than  (F3) in \cite{YYJ}. So our
theorem is different from their theorems and obtains one nontrivial
solution by adding the condition (F2) near zero.
\end{remark}

\begin{theorem} \label{thm1.2}
Assume that  $f\in C(\Omega\times{\mathbb {R}}, {\mathbb {R}})$
satisfies \eqref{8.1} and
\begin{itemize}
\item[(F3)] ${\liminf_{|t|\to\infty}\frac{F(x, t)}{t^4}>\frac{b\mu_1}{4}}$
 uniformly in $x\in \Omega$;

\item[(F4)] ${\lim_{|t|\to \infty}\big(\frac{1}{4}f(x,t)t-F(x,t)
+\frac{a\lambda_1}{4}t^2\big)=+\infty}$ uniformly in $x\in \Omega$;

\item[(F5)] there exists $\mu<\mu_1$ such that
$F(x,t)\leq\frac{a\lambda_1}{2}t^2+\frac{b\mu}{4}t^4$ for $|t|$
small.
\end{itemize}
Then \eqref{Eq.(5.1)} has at least one nontrivial solution.
\end{theorem}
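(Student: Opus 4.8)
The plan is to obtain the nontrivial solution as a critical point of the energy functional
\[
\Phi(u)=\frac a2\|u\|^2+\frac b4\|u\|^4-\int_\Omega F(x,u)\,dx,\qquad u\in H:=H_0^1(\Omega),
\]
which, by the subcritical condition \eqref{8.1}, lies in $C^1(H,\mathbb R)$ and whose critical points are exactly the weak solutions of \eqref{Eq.(5.1)}. Since the Mountain Pass Theorem is to be used without a global compactness hypothesis, I would apply it in the form: once the mountain pass geometry is in place it produces a Cerami sequence $\{u_n\}\subset H$ at the minimax level $c$, that is $\Phi(u_n)\to c$ and $(1+\|u_n\|)\|\Phi'(u_n)\|\to0$; the real work is then to prove this sequence bounded and pass to a limit. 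I would note at the outset that (F3) together with \eqref{8.1} forces $4\le p<2^*$ (otherwise $F(x,t)/t^4\to0$), so one may as well take $p\in(4,2^*)$, which in particular confines the problem to $N\le3$.

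For the geometry near $0$: combining (F5) with \eqref{8.1} gives $F(x,t)\le\frac{a\lambda_1}{2}t^2+\frac{b\mu}{4}t^4+C|t|^p$ for all $t$; using the Poincar\'e inequality $\int u^2\le\lambda_1^{-1}\|u\|^2$, the inequality $\int|u|^4\le\mu_1^{-1}\|u\|^4$ coming from the variational definition of $\mu_1$, and $H\hookrightarrow L^p$, this yields $\Phi(u)\ge\frac b4(1-\mu/\mu_1)\|u\|^4-C'\|u\|^p$, hence $\Phi\ge\alpha>0$ on a small sphere $\|u\|=\rho$ because $\mu<\mu_1$ and $p>4$. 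For the ``valley'': by (F3) there are $\beta>b\mu_1/4$ and $C_R>0$ with $F(x,t)\ge\beta t^4-C_R$ for all $t$, so testing with the nonlinear first eigenfunction $\psi_1$ (normalized by $\int\psi_1^4=1$, $\|\psi_1\|^4=\mu_1$) gives $\Phi(t\psi_1)\le\frac a2\|\psi_1\|^2t^2+\big(\frac{b\mu_1}{4}-\beta\big)t^4+C_R|\Omega|\to-\infty$ as $t\to+\infty$, so some $e=t_0\psi_1$ has $\|e\|>\rho$ and $\Phi(e)<0$. Thus $c:=\inf_{\gamma\in\Gamma}\max_{[0,1]}\Phi\circ\gamma\ge\alpha>0$, with $\Gamma$ the paths from $0$ to $e$, and the Mountain Pass Theorem supplies a Cerami sequence at level $c$.

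The heart of the argument — and the step I expect to be the main obstacle — is the boundedness of this Cerami sequence, where (F4) enters. Put $g(x,t):=\frac14 f(x,t)t-F(x,t)+\frac{a\lambda_1}{4}t^2$; since $|\langle\Phi'(u_n),u_n\rangle|\le\|\Phi'(u_n)\|\,\|u_n\|\to0$,
\[
\Phi(u_n)-\tfrac14\langle\Phi'(u_n),u_n\rangle=\tfrac a4\Big(\|u_n\|^2-\lambda_1\!\int_\Omega u_n^2\,dx\Big)+\int_\Omega g(x,u_n)\,dx=c+o(1);
\]
call the first term on the right $A_n$ (so $A_n\ge0$ by Poincar\'e) and the second $B_n$. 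By (F4), $g(x,t)\to+\infty$ uniformly as $|t|\to\infty$, so $g$ is bounded below and $B_n\ge-C_0|\Omega|$, whence $A_n$ is bounded above. If, passing to a subsequence, $\|u_n\|\to\infty$, set $v_n:=u_n/\|u_n\|$; dividing $A_n$ by $\|u_n\|^2$ forces $\int v_n^2\to\lambda_1^{-1}$, and since $v_n\rightharpoonup v$ in $H$ with $v_n\to v$ in $L^2$ and a.e., we get $\int v^2=\lambda_1^{-1}$, then $\|v\|^2=1$ by weak lower semicontinuity and Poincar\'e, so $v_n\to v$ strongly and $v$ attains equality in Poincar\'e, i.e. $v$ is a first Dirichlet eigenfunction of $-\Delta$; in particular $v\ne0$ a.e. in $\Omega$. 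Therefore $|u_n(x)|=\|u_n\|\,|v_n(x)|\to\infty$ a.e., so $g(x,u_n)\to+\infty$ a.e., and Fatou's lemma gives $B_n\to+\infty$, contradicting its boundedness. Hence $\{u_n\}$ is bounded.

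Finally, from $\{u_n\}$ bounded extract $u_n\rightharpoonup u$ in $H$, $u_n\to u$ in $L^q$ for $q<2^*$ and a.e. Since $\langle\Phi'(u_n),u_n-u\rangle\to0$, while \eqref{8.1} makes $\int f(x,u_n)(u_n-u)\to0$ and $a+b\|u_n\|^2\ge a>0$, one gets $\int\nabla u_n\cdot\nabla(u_n-u)\to0$; subtracting $\int\nabla u\cdot\nabla(u_n-u)\to0$ gives $u_n\to u$ strongly in $H$. Then $\Phi(u)=c>0=\Phi(0)$ and $\Phi'(u)=0$, so $u$ is a nontrivial weak solution of \eqref{Eq.(5.1)}. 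The delicate point is the identification of the weak limit of the normalized sequence as a first eigenfunction of $-\Delta$ — precisely what makes $|u_n|\to\infty$ pointwise and thereby lets (F4) be exploited through Fatou's lemma.
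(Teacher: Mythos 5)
Your proof is correct and follows the same overall strategy as the paper: mountain pass geometry from (F5) near zero and (F3) along the nonlinear eigenfunction direction, with (F4) supplying the boundedness of the Cerami sequence via Fatou's lemma once the blow-up direction is identified as a first eigenfunction of $-\Delta$. The one step where you genuinely diverge is that identification. The paper (Lemma \ref{lem2.2}) splits $u_n=\phi_n+w_n$ with $\phi_n\in E_1$, $w_n\in E_1^{\perp}$, and uses the spectral gap to get $\frac{a}{4}\|u_n\|^2-\frac{a\lambda_1}{4}|u_n|_2^2\geq \frac{a}{4}\bigl(1-\frac{\lambda_1}{\lambda_2}\bigr)\|w_n\|^2$, so that $\|w_n\|$ stays bounded and $u_n/\|u_n\|\to v\in E_1$ outright. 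You instead keep the quantity $A_n=\frac{a}{4}\bigl(\|u_n\|^2-\lambda_1|u_n|_2^2\bigr)$ intact, observe it is nonnegative and bounded above, divide by $\|u_n\|^2$, and use compactness of $H\hookrightarrow L^2$ together with the equality case of the Poincar\'e inequality to conclude that the weak limit of $v_n=u_n/\|u_n\|$ is a first eigenfunction. Both arguments are valid; the paper's decomposition yields the slightly stronger information that the $E_1^{\perp}$-component is bounded (not merely $o(\|u_n\|)$), while your version avoids invoking the eigenspace decomposition and the gap $\lambda_1<\lambda_2$ explicitly (though the equality case of Poincar\'e encodes the same fact). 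Two further cosmetic differences: in Step 2 you test with the attained minimizer $\psi_1$ of $J$ on $S$, where the paper uses an approximate minimizer $u\in S$ with $\|u\|^4\leq\mu_1+\varepsilon/2$, and you spell out the strong-convergence step and the restriction to $4<p<2^*$ (hence $N\leq 3$) that the paper leaves implicit.
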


\begin{remark} \label{rmk2} \rm
Condition (F4) is a new condition for a class of function $f(x,t)$
and is
 weaker than (S1)--(S5).
For example, let
$$
f(x,t)=\frac{a\lambda_1}{8}\Big(8t^3\ln(1+t^2)+\frac{4t^5}{1+t^2}
 +4t^3\cos t^4\Big).
$$
A simple computation shows that
$$
\frac{1}{4}f(x,t)t-F(x,t)+\frac{a\lambda_1}{4}t^2 =
\frac{a\lambda_1}{8}\Big(\frac{t^6 (1+\cos t^4)}{1+t^2} + \frac{t^4
(2+\cos t^4)+2t^2}{1+t^2}- \sin t^4\Big)
$$
and
$$
\lim_{|t|\to
\infty}\Big(\frac{1}{4}f(x,t)t-F(x,t)+\frac{a\lambda_1}{4}t^2\Big)
=+\infty.
$$
Hence, $f(x,t)$ satisfies all the assumptions of Theorem
\ref{thm1.2}, but it does not satisfy any conditions of (S1)--(S5).
\end{remark}

\section{Preliminaries}

We consider  $H:=H_0^1(\Omega)$ endowed with the norm
$\|u\|=\big(\int_\Omega |\nabla u|^2dx\big)^{1/2}$. We denote the
usual $L^p(\Omega)$-norm by $|\cdot|_p$. Since $\Omega $ is a
bounded domain, it is well known that $H \hookrightarrow
L^p(\Omega)$ continuously for
 $p\in[1, 2^*]$, and compactly for $p\in[1, 2^*)$. Moreover there
exists $\gamma_p>0$ such that
\begin{equation}\label{eq6.1}
|u|_p\leq\gamma_p\|u\|, \quad  u\in H.
\end{equation}

Seeking a weak solution of problem \eqref{8.1} is equivalent to
finding a critical point of the $C^1$ functional
\begin{equation}\label{eq6.2}
I(u):=\frac{a}{2}\|u\|^2+\frac{b}{4}\|u\|^4-\int_\Omega F(x,u)dx,
\quad u\in H,
\end{equation}
which implies that
\begin{equation}\label{eq6.3}
\langle I'(u),v \rangle =(a+b\|u\|^2)\int_\Omega \nabla u
\cdot\nabla v dx-\int_\Omega f(x,u)vdx, \ \ u,v\in H.
\end{equation}
Let
$$
{E_j:=\oplus_{i\leq j} \ker(-\Delta-\lambda_i)},
$$
where $0<\lambda_1\leq\lambda_2\leq\lambda_3
\leq\dots\leq\lambda_i\leq\dots$ are the eigenvalues of
$(-\Delta,H)$. We denote a subsequence of a sequence $\{u_n\}$ as
$\{u_n\}$ to simplify the notation unless specified. We need the
following concept, which was introduced by Cerami \cite{GC} and is a
weak version of
 the (PS) condition.

\begin{definition}[\cite{GC}] \label{def2.1} \rm
 Let $J\in C^1(X,\mathbb {R})$, we say that $J$ satisfies the Cerami condition
at the level $c\in \mathbb {R}$ ($(Ce)_c$ for short), if any
sequence $\{u_n\}\subset X$ with
$$
J(u_n)\to c, \quad (1+\|u_n\|)J'(u_n)\to 0\quad\text{as } n\to
\infty,
$$
possesses a convergence subsequence in $X$; $J$ satisfies the $(Ce)$
condition if $J$ satisfies the $(Ce)_c$ for all $c\in \mathbb {R}$.
\end{definition}

The following lemma, which can be found in \cite{DG}, is our main
tool in this article.

\begin{lemma}[Mountain Pass Theorem] \label{lem2.1}
Let $H$ be a real Banach space and $I\in C^1(H, \mathbb R)$
satisfying the $(Ce)$ condition. Suppose $I(0)=0$,
\begin{itemize}
\item[(i)] there are constants $\rho, \beta> 0$ such that
$I|_{\partial B_\rho}\geq\beta$ where
$$
B_\rho=\{u\in H : \|u\|\leq \rho\};
$$

\item[(ii)] there is $u_1\in H$ and $\|u_1\|>\rho$ such that $I(u_1)<0$.
\end{itemize}
Then $I$ possesses a critical value $c\geq \beta$.  Moreover $c$ can
be characterized as
$$
c =\inf_{g\in\Gamma}\max_{u\in g([0,1])} I(u), \quad \Gamma= \{g\in
C([0, 1],H) : g(0) = 0, g(1) = u_1\}.
$$
\end{lemma}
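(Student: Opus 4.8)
The plan is the classical minimax argument resting on a deformation lemma, adapted to the Cerami condition. Define
\[
c=\inf_{g\in\Gamma}\max_{t\in[0,1]}I(g(t)),\qquad\Gamma=\{g\in C([0,1],H):g(0)=0,\ g(1)=u_1\}.
\]
First I would record that $\beta\le c<\infty$. The class $\Gamma$ is nonempty since the segment $g_0(t)=tu_1$ lies in it, and then $c\le\max_{t\in[0,1]}I(tu_1)<\infty$ by continuity of $I$ on the compact arc $g_0([0,1])$. For the lower bound, every $g\in\Gamma$ is a continuous curve from $0$ (which lies in the open ball $\|u\|<\rho$) to $u_1$ (for which $\|u_1\|>\rho$), so $t\mapsto\|g(t)\|$ attains the value $\rho$ at some $t_0\in(0,1)$; hence $\max_tI(g(t))\ge I(g(t_0))\ge\beta$ by hypothesis (i), and taking the infimum over $g$ yields $c\ge\beta>0$.

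Next comes the core assertion, that $c$ is a critical value of $I$, which I would prove by contradiction: assume $K_c:=\{u\in H:I(u)=c,\ I'(u)=0\}=\emptyset$. Since $I\in C^1(H,\mathbb R)$ satisfies $(Ce)_c$ and $K_c=\emptyset$, the deformation lemma in its Cerami form supplies $\varepsilon\in(0,c/2)$ and a continuous $\eta:[0,1]\times H\to H$ with $\eta(0,\cdot)=\mathrm{id}$, with $I(\eta(s,u))\le I(u)$ for all $(s,u)$, with $\eta(s,u)=u$ whenever $I(u)\notin[c-2\varepsilon,c+2\varepsilon]$, and with $\eta(1,\{I\le c+\varepsilon\})\subset\{I\le c-\varepsilon\}$. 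This is the step I expect to be the real obstacle: one must construct a locally Lipschitz pseudo-gradient field $V$ for $I$ and rescale it so that $\|V(u)\|\le C(1+\|u\|)$, which is exactly what makes the descending flow $\dot\sigma=-V(\sigma)$ exist for all times (it cannot blow up in finite time); and the weighted Cerami condition, in which $(1+\|u_n\|)\|I'(u_n)\|\to 0$ is required rather than just $\|I'(u_n)\|\to 0$, is precisely the hypothesis that—together with $I(u_n)\to c$ and $K_c=\emptyset$—gives a uniform bound $(1+\|u\|)\|I'(u)\|\ge\delta>0$ on the slab $\{\,|I-c|\le 2\varepsilon\,\}$ after shrinking $\varepsilon$, and that bound is what drives the energy strictly below the level $c$.

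Finally I would close the loop. By definition of $c$, pick $g\in\Gamma$ with $\max_tI(g(t))\le c+\varepsilon$, and set $\widetilde g(t):=\eta(1,g(t))$, which is continuous. Because $I(0)=0<c-2\varepsilon$ and $I(u_1)<0<c-2\varepsilon$, the endpoints lie outside the slab $[c-2\varepsilon,c+2\varepsilon]$ and are therefore fixed by $\eta(1,\cdot)$, so $\widetilde g(0)=0$, $\widetilde g(1)=u_1$ and $\widetilde g\in\Gamma$. On the other hand $\widetilde g([0,1])=\eta\big(1,g([0,1])\big)\subset\eta(1,\{I\le c+\varepsilon\})\subset\{I\le c-\varepsilon\}$, so $\max_tI(\widetilde g(t))\le c-\varepsilon<c$, contradicting the definition of $c$ as an infimum. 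Hence $K_c\ne\emptyset$: $I$ has a critical value at $c\ge\beta$, with the stated minimax characterization. An alternative to the explicit flow is to apply Ekeland's variational principle on the complete metric space $(\Gamma,d)$ with $d(g,h)=\max_t\|g(t)-h(t)\|$ to the functional $g\mapsto\max_tI(g(t))$, extracting an ``almost critical'' near-optimal path and feeding it, via the weighted Cerami condition, into the same contradiction.
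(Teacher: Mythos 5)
The paper does not prove this lemma at all: it is stated as a known result, imported from Costa--Miyagaki \cite{DG} (the Cerami-condition variant of the Ambrosetti--Rabinowitz mountain pass theorem), and used as a black box in the proof of Theorem \ref{thm1.2}. So there is no in-paper argument to compare yours against; judged on its own, your proposal is the standard and correct proof. The two elementary steps are right: $\Gamma\neq\emptyset$ via $t\mapsto tu_1$ gives $c<\infty$, and the intermediate value theorem applied to $t\mapsto\|g(t)\|$ forces every path to cross $\partial B_\rho$, giving $c\geq\beta$. The contradiction step is also correctly organized, and you identify the genuine technical content accurately: one needs a deformation lemma valid under the Cerami condition, which requires (a) rescaling the pseudo-gradient field so that $\|V(u)\|\leq C(1+\|u\|)$, guaranteeing global existence of the descent flow, and (b) the observation that $(Ce)_c$ together with $K_c=\emptyset$ yields $(1+\|u\|)\|I'(u)\|\geq\delta>0$ on a slab $\{|I-c|\leq 2\varepsilon\}$ --- if not, a sequence violating this bound would be a Cerami sequence converging to a point of $K_c$. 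Your endpoint-fixing argument is also correct, since $\varepsilon<c/2$ ensures $I(0)=0$ and $I(u_1)<0$ both lie below $c-2\varepsilon$. The only caveat is that the Cerami-type deformation lemma is itself asserted rather than proved, but that is an established result (Bartolo--Benci--Fortunato, Cerami) and delegating it is no worse than what the paper itself does in citing \cite{DG} for the whole statement.
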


We give a lemma about the $(Ce)$ condition which will play an
important role in the proof of our theorems.

\begin{lemma} \label{lem2.2}
Assume that $f(x,t)$ satisfies \eqref{8.1} and (F4), then $I$
satisfies the $(Ce)$ condition.
\end{lemma}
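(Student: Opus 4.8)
The plan is to show that any Cerami sequence $\{u_n\}$ for $I$ at an arbitrary level $c$ is bounded in $H$, and then upgrade boundedness to strong convergence by a standard argument. Suppose, for contradiction, that $\|u_n\|\to\infty$ along a subsequence. Set $v_n=u_n/\|u_n\|$, so $\|v_n\|=1$; passing to a subsequence, $v_n\rightharpoonup v$ in $H$, $v_n\to v$ in $L^p(\Omega)$ for $p\in[1,2^*)$, and $v_n(x)\to v(x)$ a.e. The first dichotomy is whether $v=0$ or $v\neq 0$. I expect the main obstacle to lie precisely in the case $v\neq 0$: here one uses that on the set $\{v\neq 0\}$ we have $|u_n(x)|\to\infty$, and one wants to exploit (F4) together with the subcritical bound \eqref{8.1}; the delicate point is controlling the contribution of $F(x,u_n)$ on the complementary set where $v=0$, which is where the extra factor $(1+\|u_n\|)$ in the Cerami condition (as opposed to plain (PS)) is essential.

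Concretely, from $I(u_n)\to c$ and $\langle I'(u_n),u_n\rangle\to 0$ (the latter using $(1+\|u_n\|)I'(u_n)\to 0$), I would form the combination
\[
I(u_n)-\tfrac14\langle I'(u_n),u_n\rangle
=\frac{a}{4}\|u_n\|^2+\int_\Omega\Big(\tfrac14 f(x,u_n)u_n-F(x,u_n)\Big)dx
\longrightarrow c.
\]
Rewriting, $\displaystyle\int_\Omega\Big(\tfrac14 f(x,u_n)u_n-F(x,u_n)+\tfrac{a\lambda_1}{4}u_n^2\Big)dx=c+o(1)+\frac{a\lambda_1}{4}\int_\Omega u_n^2\,dx-\frac{a}{4}\|u_n\|^2.$ Since $\lambda_1$ is the first eigenvalue, $\lambda_1\int_\Omega u_n^2\,dx\le\|u_n\|^2$, so the right-hand side is bounded above. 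On the other hand (F4) says the integrand tends to $+\infty$ pointwise wherever $|u_n(x)|\to\infty$; combined with a uniform lower bound for the integrand (which follows from \eqref{8.1}: $\tfrac14 f(x,t)t-F(x,t)+\tfrac{a\lambda_1}{4}t^2\ge -M$ for a suitable constant $M$, since the map is continuous and, by (F4), bounded below for large $|t|$), Fatou's lemma applied to the nonnegative functions $\tfrac14 f(x,u_n)u_n-F(x,u_n)+\tfrac{a\lambda_1}{4}u_n^2+M$ forces $|\{x:|u_n(x)|\to\infty\}|=0$, i.e. $v=0$ a.e.

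So we are reduced to $v=0$. Here I would divide the identity $I(u_n)\to c$ by $\|u_n\|^4$ and by $\|u_n\|^2$ respectively. From \eqref{8.1}, $|F(x,t)|\le C'(|t|^p+1)$, so $\int_\Omega F(x,u_n)\,dx/\|u_n\|^4\le C'(\gamma_p^p\|u_n\|^{p}+|\Omega|)/\|u_n\|^4$; if $p<4$ this is $o(1)$, and dividing $I(u_n)=\tfrac a2\|u_n\|^2+\tfrac b4\|u_n\|^4-\int F$ by $\|u_n\|^4$ gives $\tfrac b4\to 0$, a contradiction. If $p\ge 4$ one argues instead with the quantity $\int_\Omega F(x,u_n)\,dx$ controlled via $v_n\to 0$ in $L^s$ for every $s<2^*$: writing $F(x,u_n)=F(x,u_n)\chi_{\{|u_n|\le R\}}+F(x,u_n)\chi_{\{|u_n|>R\}}$ and using (F4) on the second piece together with the boundedness from the displayed Fatou estimate, one shows $\int_\Omega F(x,u_n)\,dx=o(\|u_n\|^4)$ as well, and the same division by $\|u_n\|^4$ yields the contradiction. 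Hence $\{u_n\}$ is bounded.

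Finally, with $\{u_n\}$ bounded, pass to $u_n\rightharpoonup u$ in $H$ and $u_n\to u$ in $L^p(\Omega)$. From \eqref{eq6.3},
\[
(a+b\|u_n\|^2)\int_\Omega\nabla u_n\cdot\nabla(u_n-u)\,dx=\langle I'(u_n),u_n-u\rangle+\int_\Omega f(x,u_n)(u_n-u)\,dx.
\]
The first term on the right is $o(1)$ since $I'(u_n)\to 0$ and $\{u_n-u\}$ is bounded; the second is $o(1)$ by \eqref{8.1}, Hölder, and the compact embedding (so $\int f(x,u_n)(u_n-u)\to 0$). Also $\int_\Omega\nabla u\cdot\nabla(u_n-u)\,dx\to 0$ by weak convergence, and $a+b\|u_n\|^2\ge a>0$, so $\|u_n-u\|^2\to 0$; thus $u_n\to u$ strongly in $H$. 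This gives the $(Ce)$ condition at every level $c$, completing the proof.
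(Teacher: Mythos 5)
Your overall strategy for boundedness is the same as the paper's: test the Cerami sequence against $I-\frac14\langle I',\cdot\rangle$, use the lower bound $\frac14 f(x,t)t-F(x,t)+\frac{a\lambda_1}{4}t^2\ge -M$ obtained from \eqref{8.1} and (F4), normalize $v_n=u_n/\|u_n\|$, and apply Fatou's lemma; your final compactness step (bounded sequence plus subcritical growth gives a convergent subsequence) is also fine. The difference lies in how the contradiction is closed. The paper decomposes $u_n=\phi_n+w_n$ with $\phi_n\in E_1$, uses $\|u_n\|^2-\lambda_1|u_n|_2^2\ge(1-\frac{\lambda_1}{\lambda_2})\|w_n\|^2$ to see that $\|w_n\|$ is bounded, hence $v_n\to v\in E_1$ with $\|v\|=1$; since the first eigenfunction does not vanish, $|u_n(x)|\to\infty$ a.e.\ and Fatou yields the contradiction in one stroke. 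You instead use Fatou to conclude $v=0$ a.e.\ and then must exclude $v=0$ separately, and that second half contains a genuine gap.

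For $p<4$ your division of $I(u_n)$ by $\|u_n\|^4$ works, but for $4\le p<2^*$ (which occurs whenever $N\le 4$) the claimed estimate $\int_\Omega F(x,u_n)\,dx=o(\|u_n\|^4)$ is not merely unproved --- it is inconsistent with identities you already have: $I(u_n)\to c$ gives $\int_\Omega F(x,u_n)\,dx=\frac{a}{2}\|u_n\|^2+\frac{b}{4}\|u_n\|^4-c+o(1)$, which is asymptotic to $\frac{b}{4}\|u_n\|^4$, so no truncation of $F$ can produce $o(\|u_n\|^4)$. The repair is short and already latent in your displayed identity: since its left-hand side is bounded below by $-M|\Omega|$, you get $\frac{a}{4}\bigl(\|u_n\|^2-\lambda_1|u_n|_2^2\bigr)\le c+M|\Omega|+o(1)$, i.e.\ $\|u_n\|^2-\lambda_1|u_n|_2^2=O(1)$; dividing by $\|u_n\|^2\to\infty$ and using the compact embedding $H\hookrightarrow L^2(\Omega)$ gives $|v|_2^2=1/\lambda_1>0$, contradicting $v=0$. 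With that substitution (or with the paper's eigenspace decomposition, which needs the spectral gap $\lambda_1<\lambda_2$) the proof closes.
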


\begin{proof}
Suppose that $\{u_n\} $ is a $(Ce)_c$ sequence for $c\in \mathbb
{R}$
\begin{equation}\label{eq6.4}
I(u_n)\to c, \quad   (1+\|u_n\|)I'(u_n)\to 0 \quad\text{as } n\to
\infty.
\end{equation}
Now firstly, we prove that $\{u_n\} $ is a bounded sequence. From
\eqref{eq6.2}, \eqref{eq6.3} and \eqref{eq6.4}, we obtain
\begin{equation}\label{eq6.5}
1+c \geq I(u_n)-\frac{1}{4} I'(u_n)u_n
=\frac{a}{4}\|u_n\|^2+\int_\Omega
\Big(\frac{1}{4}f(x,u_n)u_n-F(x,u_n)\Big)dx.
\end{equation}
By (F4), there exists $M>0$ such that
\begin{equation}\label{eq6.6}
\frac{1}{4}f(x,t)t-F(x,t)+\frac{a\lambda_1}{4}|t|^2\geq -M
\end{equation}
for all $x\in\Omega$ and $t\in\mathbb {R}$. And let
$u_n=\phi_n+w_n$, where $\phi_n\in E_1$ and $w_n\in E^{\perp}_1$.
From \eqref{eq6.5} and \eqref{eq6.6}, one obtains
\begin{equation} \label{eq6.7}
\begin{aligned}
1+c
&\geq I(u_n)-\frac{1}{4} I'(u_n)u_n  \\
&=\frac{a}{4}\|u_n\|^2-\frac{a\lambda_1}{4}|u_n|^2_2 +\int_\Omega
\Big(\frac{1}{4}f(x,u_n)u_n-F(x,u_n)
 +\frac{a\lambda_1}{4}|u_n|^2\Big)dx \\
&\geq
\frac{a}{4}\big(1-\frac{\lambda_1}{\lambda_2}\big)\|w_n\|^2-M|\Omega|
\end{aligned}
\end{equation}
which implies that $\|w_n\|$ is bounded. We claim that $\{u_n\}$ is
a bounded sequence. Otherwise, there is a subsequence of $\{u_n\}$
satisfying $\|u_n\| \to+\infty$ as $n\to+\infty$. Then we obtain
 $$
\frac{w_n}{\|u_n\|} \to 0 \in H.
$$
Since $\phi_n/\|u_n\|$ is bounded in $E_1$ ($E_1$ has finite
dimension), we have $\phi_n/\|u_n\|\to v$ in $E_1$. By
$$
v_n:=\frac{u_n}{\|u_n\|}=\frac{\phi_n+w_n}{\|u_n\|}
=\frac{\phi_n}{\|u_n\|}+\frac{w_n}{\|u_n\|}\to v\in  E_1,
$$
one has
\begin{equation}\label{eq6.8}
\frac{u_n(x)}{\|u_n\|}\to v(x)\quad\text{a.e. in }\Omega.
\end{equation}
 From $\|v_n\|=1$, we obtain that $\|v\|=1$. And by $v\in E_1$, one has
 that $v(x)>0$ or $v(x)<0$, which implies that
\begin{equation}\label{eq6.9}
|u_n(x)|\to +\infty\quad\text{as }n\to+\infty
\end{equation}
for all $x\in \Omega$ by \eqref{eq6.8}. It follows from
\eqref{eq6.7}, \eqref{eq6.9} and Fatou's lemma that
\begin{align*}
1+c
&\geq I(u_n)-\frac{1}{4} I'(u_n)u_n  \\
&=\frac{a}{4}\|u_n\|^2+\int_\Omega \Big(\frac{1}{4}f(x,u_n)u_n-F(x,u_n)\Big)dx \\
&\geq \int_\Omega
\Big(\frac{1}{4}f(x,u_n)u_n-F(x,u_n)+\frac{a\lambda_1}{4}|u_n|^2
\Big)dx \\
&\to+\infty\quad\text{as }n \to +\infty,
\end{align*}
which is a contradiction. Then we get that $\{u_n\}$ is bounded in
$H$. Since $f(x,t)$ is subcritical growth, we can easily obtain that
$\{u_n\}$ has a convergence subsequence. Hence, $I$ satisfies the
$(Ce)$ condition.
\end{proof}

\section{Proof of main results}

\begin{proof}[Proof of Theorem \ref{thm1.1}]
 Let
$$
\overline{u}=\Big(\int_\Omega \nabla u \cdot \nabla \phi_1 dx
\Big)\phi_1, \quad \widetilde{u}=u-\overline{u},
$$
where the $\phi_1$ is the first eigenfunction corresponding to
$\lambda_1$.

The following statements come from \cite{SQ}. First, there exist a
real function $g\in L^1(\Omega)$, and $G\in C(\mathbb{R},\
\mathbb{R})$ which is subadditive; that is,
$$
G(s+t)\leq G(s)+G(t)
$$
for all $s,\ t\in \mathbb{R}$, and coercive; that is,
$G(t)\to+\infty$ as $|t|\to\infty$, and satisfies
$$
G(t)\leq|t|+4
$$
for all $t\in \mathbb{R}$, such that
$$
F(x, t)-\frac{a\lambda_1}{2}t^2-\frac{b\mu_1}{4}t^4\leq-G(t)+g(x)
$$
for all $t\in \mathbb{R}$ and $x\in \Omega$.

Second, the functional $\int_\Omega G(v)dx$ is coercive on $E_1$
(this result also can be seen in \cite{SQ}). We claim that $I(u)$ is
coercive.
\begin{align*}
&\int_\Omega \Big( F(x, u)-\frac{a\lambda_1}{2}u^2-\frac{b\mu_1}{4}u^4 \Big)dx\\
&\leq -\int_\Omega G(u)dx+\int_\Omega g(x)dx \\
& \leq -\int_\Omega\left( G(\overline{u})-G(-\widetilde{u})\right)
dx
 +\int_\Omega g(x)dx \\
& \leq -\int_\Omega G(\overline{u})dx +|\widetilde{u}|_{1}+4
|\Omega|
 +\int_\Omega g(x)dx \\
& \leq -\int_\Omega G(\overline{u})dx +C_1(\|\widetilde{u}\|+1)
\end{align*}
for all $u\in H$ and some
$$
C_1=C+4  |\Omega|+\int_\Omega g(x)dx,
$$
where $C$ is a positive constant in Sobolev's inequality,
$$
|u|_{1}\leq C\|u\|,\quad |u|_{2}\leq C\|u\|
$$
for all $u\in H$. Hence we have
\begin{align*}
I(u_n)
&=\frac{a}{2}\|u\|^2+\frac{b}{4}\|u\|^4-\int_\Omega F(x,u)dx \\
&=\frac{a}{2}\|u_n\|^2-\frac{a\lambda_1}{2}|u_n|^2_2
 +\frac{b}{4}\|u_n\|^4-\frac{b\mu_1}{4}|u_n|^4_4 \\
&\quad+\int_\Omega\Big(\frac{a\lambda_1}{2}|u_n|^2+\frac{b\mu_1}{4}|u_n|^4
 -F(x,u_n)\Big)dx \\
&\geq\frac{a}{2}\|u_n\|^2-\frac{a\lambda_1}{2}|u_n|^2_2
 +\int_\Omega\Big(\frac{a\lambda_1}{2}|u_n|^2+\frac{b\mu_1}{4}|u_n|^4-F(x,u_n)\Big)dx \\
&\geq \frac{a}{2}\big(1-\frac{\lambda_1}{\lambda_2}\big)
 \|\widetilde{u}\|^2+\int_\Omega G(\overline{u})dx-C_1(\|\widetilde{u}\|+1)
\end{align*}
for all $u\in H$. By the coercivity of the functional $\int_\Omega
G(v)dx$ on $E_1$ and that fact
$$
\|u\|^2=\|\overline{u}\|^2+\|\widetilde{u}\|^2,
$$
which implies that the functional $I(u)$ is coercive. $I$ satisfies
the $(Ce)$ condition and is bounded from below. By (F2), we have
$$
F(x,t)\geq \frac{a\lambda}{2}t^2-C|t|^p
$$
for all $x\in \Omega$ and $t\in\mathbb{R}$, which implies that
\begin{align*}
I(u)
&\leq\frac{a}{2}\|u\|^2+\frac{b}{4}\|u\|^4-\frac{a\lambda}{2}|u|_2^2+C|u|_p^p \\
&=\frac{a}{2}\big(1-\frac{\lambda}{\lambda_1}\big)\|u\|^2
+\frac{b}{4}\|u\|^4+C\|u\|^p <0
\end{align*}
for $u\in E_1\cap B_\delta$,  $\lambda>\lambda_1$, where $\delta>0$
small enough and $E_1$ is the subspace of $H$ spanned by $\phi_1$
the eigenfunctions of $\lambda_1$. Then $I(u)$ achieves the negative
infimum.
 This completes the proof
\end{proof}

\begin{proof}[Proof of Theorem \ref{thm1.2}]
By Lemmas \ref{lem2.1} and \ref{lem2.2}, it is sufficient to show
that $I$ satisfies (i) and (ii).
\smallskip

\noindent\textbf{Step 1.} There are constants $\rho,\ \beta> 0$ such
that $I(u)\geq\beta$ for all $\|u\|=\rho$. In fact, by (F5), it is
easy to see that
$$
F(x,t)\leq\frac{a\lambda_1}{2}t^2+\frac{b(\mu_1-\varepsilon)}{4}t^4+C|t|^p
$$
for all $t\in \mathbb{R}$ and $x\in \Omega$,
\begin{align*}
I(u)&\geq\frac{a}{2}\|u\|^2+\frac{b}{4}\|u\|^4
 -\frac{a\lambda_1}{2}|u|_2^2-\frac{b(\mu_1-\varepsilon)}{4}|u|_4^4
 -C\int_\Omega |u|^pdx \\
&\geq\frac{b}{4}\big(1-\frac{\mu_1-\varepsilon}{\mu_1}\big)\|u\|^4-C\gamma_p\|u\|^p.
\end{align*}
Note that $4< p < 2^*$, then for $\varepsilon$ small enough. So
there exists $\beta>0$ such that $I(u)\geq\beta$ for all
$\|u\|=\rho$, where $\rho>0$ small enough.
\smallskip

\noindent\textbf{Step 2.} There exists $u_1\in H$ and $\|u_1\|>\rho$
such that
 $I(u_1)<0$.
Indeed, for small $\varepsilon > 0$, by the definition of $\mu_1$,
we can choose $u\in S$ satisfying
 \begin{equation}\label{eq7.6}
\mu_1+\frac{\varepsilon}{2}\geq\|u\|^4.
\end{equation}
It follows from (F3) that
\begin{equation}\label{eq7.7}
F(x,t)\geq\frac{b(\mu_1+\varepsilon)}{4}t^4-C.
\end{equation}
Hence, by \eqref{eq7.6} and \eqref{eq7.7}, we have
\begin{equation} \label{eq7.8}
\begin{aligned}
I(tu) &\leq\frac{a}{2}t^2\|u\|^2+\frac{b}{4}t^4\|u\|^4
 -\frac{b}{4}t^4(\mu_1+\varepsilon)+C|\Omega| \\
&\leq\frac{a}{2}t^2\|u\|^2+\frac{b}{4}t^4\mu_1
 +\frac{b\varepsilon}{8}t^4-\frac{b}{4}t^4(\mu_1+\varepsilon)+C|\Omega| \\
&=-\frac{b\varepsilon}{8}t^4+\frac{a}{2}t^2\|u\|^2+C|\Omega|.
\end{aligned}
\end{equation}
Thus, $I(tu)\to-\infty$ as $t\to\infty$. Therefore, there is $u_1
\in H$ with $\|u_1\|>\rho$ such that $I(u_1)< 0$.
 This completes the proof.
\end{proof}

\end{document}